     \def\swappedhead@plain#1#2#3{%
       \thmnumber{(\textup{#2})}%                    % upright number
       \thmname{\@ifnotempty{#2}{~}\textup{#1}}%     % upright name
       \thmnote{ {\textup{(#3)}}}}%                  % upright note
     \let\swappedhead\swappedhead@plain
   \theoremstyle{plain}
   \newtheorem{theo}[equation]{Theorem}
   \newtheorem{prop}[equation]{Proposition}
   \theoremstyle{definition}
   \newtheorem{rems}[equation]{Remarks}
   \newtheorem{dual}[equation]{Prequantum dual}
   \newtheorem{boxp}[equation]{Prequantum product}
   \newtheorem{redu}[equation]{Prequantum reduction}
   \numberwithin{equation}{section}
   \let\dotlessi\i % still problematic: https://tex.stackexchange.com/q/303821
   \let\polishl\l
   \let\norwegiano\o
   \let\russianbreve\u
  \DeclareMathSymbol{A}{\mathalpha}{operators}{`A}
  \DeclareMathSymbol{B}{\mathalpha}{operators}{`B}
  \DeclareMathSymbol{C}{\mathalpha}{operators}{`C}
  \DeclareMathSymbol{D}{\mathalpha}{operators}{`D}
  \DeclareMathSymbol{E}{\mathalpha}{operators}{`E}
  \DeclareMathSymbol{F}{\mathalpha}{operators}{`F}
  \DeclareMathSymbol{G}{\mathalpha}{operators}{`G}
  \DeclareMathSymbol{H}{\mathalpha}{operators}{`H}
  \DeclareMathSymbol{I}{\mathalpha}{operators}{`I}
  \DeclareMathSymbol{J}{\mathalpha}{operators}{`J}
  \DeclareMathSymbol{K}{\mathalpha}{operators}{`K}
  \DeclareMathSymbol{L}{\mathalpha}{operators}{`L}
  \DeclareMathSymbol{M}{\mathalpha}{operators}{`M}
  \DeclareMathSymbol{N}{\mathalpha}{operators}{`N}
  \DeclareMathSymbol{O}{\mathalpha}{operators}{`O}
  \DeclareMathSymbol{P}{\mathalpha}{operators}{`P}
  \DeclareMathSymbol{Q}{\mathalpha}{operators}{`Q}
  \DeclareMathSymbol{R}{\mathalpha}{operators}{`R}
  \DeclareMathSymbol{S}{\mathalpha}{operators}{`S}
  \DeclareMathSymbol{T}{\mathalpha}{operators}{`T}
  \DeclareMathSymbol{U}{\mathalpha}{operators}{`U}
  \DeclareMathSymbol{V}{\mathalpha}{operators}{`V}
  \DeclareMathSymbol{W}{\mathalpha}{operators}{`W}
  \DeclareMathSymbol{X}{\mathalpha}{operators}{`X}
  \DeclareMathSymbol{Y}{\mathalpha}{operators}{`Y}
  \DeclareMathSymbol{Z}{\mathalpha}{operators}{`Z}
  \newcommand\CC{{\mathbf C}}        % the complex numbers
  \newcommand\RR{{\mathbf R}}        % the real numbers
  \newcommand\TT{{\mathbf T}}        % the unit circle
  \newcommand\ZZ{{\mathbf Z}}        % the integers
  \newcommand\Lie{\mathfrak}
  \newcommand\LG{{\Lie{g}}}
  \newcommand\LH{{\Lie{h}}}
  \newcommand\LK{{\Lie{k}}}
\let\Im\relax
\let\Re\relax
\DeclareMathOperator\ann{ann}                                % annihilator
\DeclareMathOperator\Hom{Hom}
\DeclareMathOperator\Im{Im}
\DeclareMathOperator\Ker{Ker}
\DeclareMathOperator\Re{Re}
\DeclareMathOperator\Res{Res}
  \newcommand\e[1]{{\mathrm e^{\hspace{.06em}#1}}}           % exponential
\renewcommand\i{{\mathrm i}}                                 %\sqrt{-1}
  \newcommand\IND[3]{\smash{\operatorname{Ind}_{#1}^{#2}#3}}
  \newcommand\inv{^{-1}}                                     % inverse
  \newcommand\subg{_{\,\smash{|}\LG}}
  \newcommand\subh{_{\,\smash{|}\LH}}
  \newcommand\subhprime{_{\,\smash{|}\LH'}}
  \newcommand\subk{_{\,\smash{|}\LK}}
  \newcommand\<{\langle}                                     % <
\renewcommand\>{\rangle}                                     % >
\newcommand*{\@linkedbibitem}[1]{%
  \def\this@biblink{#1}%
  \bibitem}
\newcommand*{\linkedbibitem}{\hyper@normalise\@linkedbibitem}
\renewcommand*{\@BIBLABEL}[1]{%
  \ifdefvoid\this@biblink
    {[#1]}
    {[\expandafter\href\expandafter{\this@biblink}%
       {#1}]}}
\title{Symplectic Induction, Prequantum Induction, and Prequantum Multiplicities}
\author{
Tudor S. Ratiu\footnote{
School of Mathematical Sciences and Ministry of Education Key 
Lab on Scientific and Engineering Computing, Shanghai Jiao Tong University, Shanghai 200240, China and Section de Math\'ematiques, Universit\'e de 
Gen\`eve and Ecole Polytechnique F\'ed\'erale de Lausanne, Switzerland.
\texttt{ratiu@sjtu.edu.cn, tudor.ratiu@epfl.ch}}
\ and 
Fran\c{c}ois Ziegler\footnote{
Department of Mathematical Sciences, Georgia Southern University, Statesboro, GA 30460-8093, USA. \texttt{fziegler@georgiasouthern.edu}\newline{\color{black}\quad} AMS subject classification (2020) 53D20, 53D50, 22D30.}
}
\date{March 2, 2021}
\begin{document}

\maketitle

\begin{abstract}
Frobenius reciprocity asserts that induction from a subgroup and restriction to it are adjoint functors in categories of unitary $G$-modules. In the 1980s, Guillemin and Sternberg established a parallel property of Hamiltonian $G$-spaces, which (as we show) unfortunately fails to mirror the situation where more than one $G$-module ``quantizes'' a given Hamiltonian $G$-space. This paper offers evidence that the situation is remedied by working in the category of \emph{prequantum} $G$-spaces, where this ambiguity disappears; there, we define induction and multiplicity spaces, and establish Frobenius reciprocity as well as the ``induction in stages'' property.
\end{abstract}

%%%%%%%%%%%%%%%%%%%%%%%%%%%%%%%%%%%%%%%%%%%%%%%%%%%%%%%%%%%%%%%%%%%%%%%%
\section*{Introduction}
%%%%%%%%%%%%%%%%%%%%%%%%%%%%%%%%%%%%%%%%%%%%%%%%%%%%%%%%%%%%%%%%%%%%%%%%

Beyond the mere parametrization of irreducible unitary representations by 
coadjoint orbits originating in the work of Borel-Weil and Kirillov 
\cite{Serre:1954, Kirillov:1962}, there exists a certain well-known 
\emph{parallelism} between representation theory and the symplectic theory of 
Hamiltonian $G$-spaces. To capture it with precision, papers like 
\cite{Kazhdan:1978, Weinstein:1978, Guillemin:1982, Guillemin:1983} introduced 
purely symplectic constructions meant to mirror operations such as 
$\mathit{Ind_H^G}$ (inducing a representation from a subgroup) or $\mathit{Hom_G}$ 
(forming the space of intertwining operators between two representations). In 
that setting, one of course expects basic properties like \emph{induction in 
stages} or \emph{Frobenius reciprocity} to hold in symplectic geometry. A 
first goal of this paper is to spell out their proofs (\S2, \S3), fulfilling 
promises made in \cite[p.\,9]{Ziegler:1996} and \cite[p.\,105]{Marsden:2007}. 

A second goal of the paper is to point out that, while these constructions fit 
their purpose when the correspondence from representations to coadjoint orbits 
is one-to-one, as in the Borel-Weil theory for compact groups or the Kirillov-Bernat theory for exponential groups \cite{Kirillov:1962, Fujiwara:2015}, they fall short when it is many-to-one, as in the Auslander-Kostant theory for solvable groups 
\cite{Auslander:1971a, Shchepochkina:1994}. To remedy this, we propose new versions of both 
constructions in the category of \emph{prequantum $G$-spaces} (\S5, \S6) and 
establish the stages and Frobenius properties in that setting (\S7, \S8). 
Finally, we illustrate the need for our prequantized versions by what we 
believe is the simplest example (\S4, \S9).

\subsubsection*{Notation and conventions}

We use a concise notation for the translation of tangent and cotangent vectors 
to a Lie group: for fixed $g,q\in G$,
\begin{equation}
	\gathered T_qG             \\\vspace{-3pt}       v       \endgathered
	\gathered  \,\to\,        \\\vspace{-3pt}   \,\mapsto\,  \endgathered
	\gathered T_{gq}G         \\\vspace{-3pt}      gv,       \endgathered
	\quad\qquad\text{resp.}\qquad\quad
	\gathered T^*_qG         \\\vspace{-3pt}       p         \endgathered
	\gathered  \,\to\,        \\\vspace{-3pt}  \,\mapsto\,   \endgathered
	\gathered T^*_{gq}G     \\\vspace{-3pt}      gp          \endgathered
\end{equation}
will denote the derivative of $q\mapsto gq$, respectively its contragredient, i.e., 
$\<gp,v\>=\<p,g\inv v\>$. Likewise, we define $vg$ and $pg$ with 
$\<pg,v\>=\<p,vg\inv\>$. 

By a \emph{Hamiltonian $G$-space} we mean the triple $(X, \omega, \Phi)$ of a 
manifold $X$ on which $G$ acts, a $G$-invariant symplectic form $\omega$ on it, 
and a $G$-equivariant momentum map $\Phi:X\to\LG^*$. We identify spaces $X_1$, 
$X_2$ which are \emph{isomorphic}, i.e., related by a $G$-equivariant 
diffeomorphism which transforms $\omega_1$ into $\omega_2$ and $\Phi_1$ into 
$\Phi_2$. If several are in play, we also use subscripts like $\omega_X$, 
$\Phi_X$. We recall two cardinal properties of the momentum map:
\begin{equation}
   \label{cardinal}
   \text{(a) }\Ker(D\Phi(x)) = \LG(x)^\omega
   \qquad\quad
   \text{(b) }\Im (D\Phi(x)) = \ann(\LG_x).
\end{equation}
The first is the orthogonal relative to $\omega$ of the tangent space $\LG(x)$ 
to the orbit $G(x)$, $ x \in  X$; the second is the annihilator in $\LG^*$ of the stabilizer 
Lie subalgebra $\LG_x\subset\LG$.

%%%%%%%%%%%%%%%%%%%%%%%%%%%%%%%%%%%%%%%%%%%%%%%%%%%%%%%%%%%%%%%%%%%%%%%%
\section{Symplectic Induction}
%%%%%%%%%%%%%%%%%%%%%%%%%%%%%%%%%%%%%%%%%%%%%%%%%%%%%%%%%%%%%%%%%%%%%%%%
Given a closed subgroup $H\subset G$ and a Hamiltonian $H$-space 
$(Y,\omega_Y, \Psi)$, \cite{Kazhdan:1978} constructs an \emph{induced} 
Hamiltonian $G$-space as follows. Let $\varpi$ denote the canonical 1-form on $T^*G$ given by $\varpi(\delta p) 
= \<p,\delta q\>$, where $\delta p \in T_p(T^*G)$, $\delta q =
\pi_*(\delta p) \in T_{\pi(p)}G$, and $\pi :T^*G \rightarrow  G$ is the 
canonical projection. Endow $N:=T^*G\times Y$ with the symplectic form 
$\omega:=d\varpi + \omega_Y$ and the $G\times H$-action $(g,h)(p,y)=
(gph\inv, h(y))$, where $h(y)$
denotes the action of $h \in  H$ on $y \in  Y$. This action 
admits the equivariant momentum map 
$\phi\times\psi:N\to\LG^*\times\LH^*$,
\begin{equation}
   \label{phi_and_psi}
   \left\{
   \begin{array}{lll}
      \rlap{$\phi$}\phantom{\psi}(p,y) & \!\!= & \!\!pq\inv\\[.5ex]
      \psi(p,y) & \!\!= & \!\!\Psi(y)- q\inv p\subh
   \end{array}
   \right.
   \rlap{\qquad$(p\in T^*_qG)$.}
\end{equation}
The \textit{induced manifold} is, by definition, the Marsden-Weinstein reduced 
space of $N$ at $0\in\LH^*$, i.e.
\begin{equation}
   \label{induced_manifold}
   \IND HGY:=N/\!\!/H = \psi\inv(0)/H.
\end{equation}
In more detail: the action of $H$ is free and proper (because it is free and 
proper on the factor $T^*G$, where it is the right action of $H$ regarded as a 
subgroup of the group $T^*G$ \cite[\S III.1.6]{Bourbaki:1972}); so $\psi$ is a submersion 
(\ref{cardinal}b), $\psi\inv(0)$ is a submanifold, and \eqref{induced_manifold} 
is a manifold; moreover $\omega_{|\psi\inv(0)}$ degenerates exactly along the 
$H$-orbits (\ref{cardinal}a), so it is the pull-back of a 
uniquely defined symplectic form, $\smash{\omega_{N/\!\!/H}}$, on the quotient. 
Furthermore, the $G$-action commutes with the $H$-action and preserves 
$\psi\inv(0)$, and its momentum map $\phi$ is constant on $H$-orbits. Passing to 
the quotient, we obtain the required $G$-action on $\IND HGY$ and momentum 
map $\smash{\Phi_{N/\!\!/H}}:\IND HGY\to\LG^*$. Note that since $\psi$ is a 
submersion and $H$ acts freely, \eqref{induced_manifold} has dimension equal to $\dim (N) - 2\dim (H)$, i.e.
\begin{equation}
   \dim(\IND HGY) = 2\dim(G/H) + \dim(Y).
\end{equation}

%%%%%%%%%%%%%%%%%%%%%%%%%%%%%%%%%%%%%%%%%%%%%%%%%%%%%%%%%%%%%%%%%%%%%%%%
\section{Symplectic Induction in Stages}
%%%%%%%%%%%%%%%%%%%%%%%%%%%%%%%%%%%%%%%%%%%%%%%%%%%%%%%%%%%%%%%%%%%%%%%%

\begin{theo}[Stages]
   \label{symplectic_stages}
   If $H \subset K \subset G$ are closed subgroups
   of the Lie group $G$\textup, 
   then
   \begin{equation*}
      \IND KG{\IND HKY} = \IND HGY.   
   \end{equation*}
\end{theo}

\begin{proof}
Let $(N, \omega, \phi\times \psi)$ be as in \S1 and consider 
$M=T^*G\times T^*K\times Y$ with $2$-form
$\omega_M = d\varpi_{T^*G}+d\varpi_{T^*K}+\omega_Y$
and $G\times K\times H$-action
   \begin{equation}
      (g,k,h)(p,\bar p,y)=(gp k\inv,k\bar p h\inv,h(y)).
   \end{equation}
This admits the equivariant momentum map 
$\varphi\times\bar\phi\times\bar\psi:M\to\LG^*\times\LK^*\times\LH^*$:
\begin{equation}
\left\{
\begin{array}{lll}
\varphi(p,\bar p,y) & \!\!= & \!\!pq\inv\\[.5ex]
\bar\phi(p,\bar p,y) & \!\!= & \!\!\bar p\bar q\inv-q\inv p\subk\\[.5ex]
\bar\psi(p,\bar p,y) & \!\!= & \!\!\Psi(y)- \bar q\inv\bar p\subh
\end{array}
\right.
\end{equation}
for $(p,\bar p)\in T^*_q G\times T^*_{\bar q} K$. Define $r:M\to N$ by 
$r(p,\bar p, y)=(p \bar q,y)$ and consider the commutative diagram in Fig.~1, 
where we have written $j_1,j_2,j_3$ 
   \begin{figure}[t] %  [htbp]figure placement: here, top, bottom, or page
      \centering
      \begin{tikzcd}[sep=small, arrows={line width=1.2*rule_thickness}]
         M
         \ar[rrr,"r"]
         &&&
         N
         \\
         &&
         \bar\psi\inv(0)
         \ar[ull,hook',swap,"j_1" inner sep=0.2ex]
         \ar[dd,"\pi_1"]
         \\
         &&&
         (\bar\phi\times\bar\psi)\inv(0)
         \ar[ul,hook',swap,"j" inner sep=0.25ex]
         \ar[rrrrrrr,"s",dashed]
         \ar[dd,"\pi"]
         &&&&&&&
         \psi\inv(0)
         \ar[uulllllll,hook',swap,"j_3" inner sep=0.25ex]
         \ar[dddddd,"\pi_3"]
         \\
         &&
         M/\!\!/H
         \\
         &&&
         \bar\Phi_{M/\!\!/H}\inv(0)
         \ar[ul,hook',swap,"j_2" inner sep=0.25ex]
         \ar[dddd,"\pi_2"]
         \\
         \\
         \\
         \\
         &&&
         (M/\!\!/H)/\!\!/K
         \ar[rrrrrrr,"t",dashed]
         &&&&&&&
         N/\!\!/H
      \end{tikzcd}
      \caption{Construction of the isomorphism $t$.}
   \end{figure}
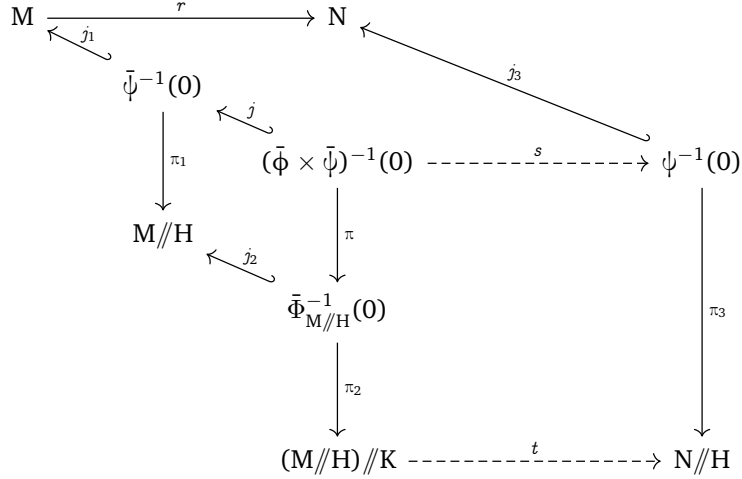
and $\pi_1,\pi_2,\pi_3$ for the inclusion and projection maps involved in 
constructing the reduced spaces $M/\!\!/H=T^*G\times\IND HKY$,  
$(M/\!\!/H)/\!\!/K=\IND KG{\IND HKY}$, and $N/\!\!/H=\IND HGY$; also $j,\pi$ 
are the obvious inclusion and restriction, and $\smash{\bar\Phi_{M/\!\!/H}}$ is 
the momentum map for the residual $K$-action on $M/\!\!/H$. The map 
$r\circ j_1 \circ j$ satisfies
\begin{equation}
  \begin{aligned}
	\psi((r\circ j_1 \circ j)(p,\bar p,y))
	  &=\psi(p\bar q,y)\\
	  &=\Psi(y)-(q\bar q)\inv p\bar q\subh\\
	  &=\Psi(y)-\bar q\inv (q\inv p)\subk\bar q\subh\\
	  &=\Psi(y)- \bar q\inv\bar p\subh &&\text{since }\bar\phi(p,\bar p,y)=0\\
	  &=0 &&\text{since }\bar\psi(p,\bar p,y)=0.
      \end{aligned}
   \end{equation}
\noindent
So $r\circ j_1\circ j$ takes values in $\psi\inv(0)$, i.e., there is a map 
$s$ as indicated in Fig.~1. Moreover, $s$ is onto since one 
verifies that $(p,y)\mapsto(p,(q\inv p)\subk,y)$ provides a right inverse.
The map $s$ is equivariant relative to the 
$G\times K \times H$-action on $(\bar{\phi} \times \bar{\psi})^{-1}(0)$
and the $G\times H$-action on $\psi^{-1}(0)$:
   \begin{equation}
      \begin{aligned}
	      s((g,k,h)(p,\bar p,y))
	      &=r(gp k\inv,k\bar p h\inv,h(y))\\
	      &=\smash{(gp\bar q h\inv, h(y))}\\
	      &=(g,h)(p\bar q,y)\\
	      &=(g,h)(s(p,\bar p,y)).
      \end{aligned}
   \end{equation}
Hence $s$ descends to a $G$-equivariant surjection $t$ as indicated in Fig.~1.
Furthermore, one checks without trouble that the fibers of $s$ are precisely 
the $K$-orbits in its domain. As $\pi_2\circ \pi$ collapses these orbits to 
points, it follows that $t$ is bijective, hence a diffeomorphism by 
\cite[{}5.9.6]{Bourbaki:1967}. The relation $\varphi=r^*\phi$ implies that 
$t$ relates the momentum maps for $G$: $\smash{\Phi_{(M/\!\!/H)/\!\!/K}}
=\smash{t^*\Phi_{N/\!\!/H}}$, so there only remains to see that 
$\smash{\omega_{(M/\!\!/H)/\!\!/K}}=\smash{t^*\omega_{N/\!\!/H}}$. To this end 
we compute
   \begin{equation}
      \begin{aligned}
	      (s^*j_3^*\varpi_{T^*G})(\delta p,\delta\bar p,\delta y)
	      &=\varpi_{T^*G}(\delta[p\bar q])\\
	      &=\<p\bar q,\delta[q\bar q]\>\\
	      &=\<p,\delta q\>+\smash{\<q\inv p,[\delta\bar q]\bar q\inv\>}
&&\text{since }\delta[q\bar q]=[\delta q]\bar q+q[\delta\bar q]\\
	      &=\<p,\delta q\>+\<\bar p,\delta\bar q\>
&&\text{since }\bar\phi(p,\bar p,y)=0\\
	      &=\varpi_{T^*G}(\delta p) + \varpi_{T^*K}(\delta\bar p).\phantom{\inv}
      \end{aligned}
   \end{equation}
\noindent
Taking exterior derivatives and adding $\omega_Y$ we obtain $s^*j_3^*\omega_{N}
=j^*j_1^*\omega_M$ or, equivalently (by commutativity of the diagram and 
definition of the reduced $2$-forms), 
$\smash{\pi^*\pi_2^*\,t^*\omega_{N/\!\!/H}}=
\smash{\pi^*\pi_2^*\,\omega_{(M/\!\!/H)/\!\!/K}}$. 
Since $\pi_2\circ \pi$ is a submersion, we are done. 
\end{proof}

%%%%%%%%%%%%%%%%%%%%%%%%%%%%%%%%%%%%%%%%%%%%%%%%%%%%%%%%%%%%%%%%%%%%%%%%
\section{Symplectic Frobenius Reciprocity}
%%%%%%%%%%%%%%%%%%%%%%%%%%%%%%%%%%%%%%%%%%%%%%%%%%%%%%%%%%%%%%%%%%%%%%%%

It is quite rare for an induced Hamiltonian $G$-space to be homogeneous 
or \emph{a fortiori} a coadjoint orbit (by which we mean that its momentum 
map is $1$-$1$ onto an orbit). In fact we have the following, where 
$\smash{\Phi_{N/\!\!/H}}$ is the momentum map for the induced space 
\eqref{induced_manifold}.

\begin{prop}
\label{pre_frobenius}
Let $(Y,\omega_Y,\Psi)$ be a Hamiltonian $H$-space.
\begin{enumerate}
   \item[\upshape(a)] A coadjoint orbit $\mathcal O$ of $G$ intersects 
   $\Im(\smash{\Phi_{N/\!\!/H}})\Leftrightarrow\smash{\mathcal O}\subh:=
   \{m\subh : m \in \mathcal{O}\}$ 
   intersects $\Im(\Psi)$.
   \item[\upshape(b)] If $\operatorname{Ind}_H^GY$ is homogeneous under $G$\textup, 
   then $Y$ is homogeneous under $H$.
   \item[\upshape(c)] If $\operatorname{Ind}_H^GY$ is a coadjoint 
   orbit of $G$\textup, then $Y$ is a coadjoint orbit of $H$.
   \end{enumerate}
\end{prop}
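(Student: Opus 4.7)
The plan is to handle (a) by direct unfolding of the formulas \eqref{phi_and_psi}, and to base (b) and (c) on a common ``base-point trick'': for any $y\in Y$ the restriction $\LG^*\to\LH^*$ lets us extend $\Psi(y)$ to some $p\in\LG^*=T^*_eG$, so that $(p,y)\in\psi\inv(0)$ represents a point of $\IND HGY$ whose $G$-momentum $\Phi_{N/\!\!/H}([(p,y)])=pe\inv=p$ is simply $p$ itself. For (a), note that a pair $(p,y)$ with $p\in T^*_qG$ lies in $\psi\inv(0)$ with $\phi(p,y)=\mu$ iff $p=\mu q$ and $\Psi(y)=(q\inv\mu q)\subh$; since $q\inv\mu q$ sweeps out the entire coadjoint orbit $\mathcal O$ of $\mu$ as $q$ ranges over $G$, the statement ``$\mathcal O$ meets $\Im(\Phi_{N/\!\!/H})$'' translates directly into ``$\mathcal O\subh$ meets $\Im(\Psi)$''.

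For (b), fix $y_1,y_2\in Y$ and choose $p_i\in\LG^*$ extending $\Psi(y_i)$ as above. By $G$-homogeneity of $\IND HGY$ there is $g\in G$ with $g\cdot[(p_1,y_1)]=[(gp_1,y_1)]=[(p_2,y_2)]$, so some $h\in H$ satisfies $gp_1h\inv=p_2$ in $T^*G$ and $h(y_1)=y_2$. The cotangent equation has base point $gh\inv$ on its left and $e$ on its right; matching them forces $g=h\in H$, whence $y_2=h(y_1)\in H(y_1)$.

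For (c), part (b) already yields that $\Psi(Y)$ is a single $H$-orbit, so it remains only to show $\Psi$ is injective. If $\Psi(y)=\Psi(y')$, extend their common value to a single $p\in\LG^*$; then $(p,y)$ and $(p,y')$ both lie in $\psi\inv(0)$ with the same image $p$ under $\Phi_{N/\!\!/H}$. The injectivity hypothesis on $\Phi_{N/\!\!/H}$ forces $[(p,y)]=[(p,y')]$, giving $h\in H$ with $ph\inv=p$ and $h(y)=y'$; the base-point comparison $h\inv=e$ then yields $h=e$ and hence $y=y'$.

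The only real subtlety throughout is this base-point matching in $T^*G$, which is what converts algebraic identities between translated cotangent vectors into constraints on the $H$-element implementing them. Beyond that careful bookkeeping, no significant obstacle is expected.
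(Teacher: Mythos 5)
Your proposal is correct and follows essentially the same route as the paper's proof: part (a) is the unfolding of $\Im(\Phi_{N/\!\!/H})=\phi(\psi\inv(0))$, and parts (b) and (c) use exactly the paper's device of extending $\Psi(y)$ to a covector at the identity so that base-point matching in $T^*G$ pins down the group elements (the paper's ``pick $m_i\in\LG^*$ with $\Psi(y_i)=m_{i|\LH}$'', then $h=e$ in the injectivity step). The only cosmetic difference is that in (b) your base-point matching $g=h$ is superfluous, since $h(y_1)=y_2$ already gives homogeneity of $Y$.
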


\begin{proof}
   (a): This re-expresses $\Im(\smash{\Phi_{N/\!\!/H}})=\phi(\psi\inv(0))$ 
\eqref{phi_and_psi}.
   (b): Assume $G$ is transitive on $\IND HGY$ and let $y_1, y_2\in Y$. Pick 
$m_i\in\LG^*$ such that $\Psi(y_i)=\smash{m_{i|\LH}}$. Then the $H$-orbits 
$x_i=H(m_i,y_i)$ are points in \eqref{induced_manifold}. So transitivity says 
that $x_1=g(x_2)$, i.e.
   \begin{equation}
      \label{transitivity}
	   (m_1,y_1)=(gm_2h\inv,h(y_2))
	   \quad\text{for some }
	   h\in H.
   \end{equation}
   In particular $y_1=h(y_2)$, as claimed.
   (c): Assume further that $\smash{\Phi_{N/\!\!/H}}$ is injective and 
suppose $\Psi(y_1)=\Psi(y_2)$. Then we can pick $m_1=m_2$ above. Since 
$\smash{\Phi_{N/\!\!/H}}(x_i)=m_i$ it follows, by injectivity, that $x_1=x_2$, 
i.e., we have \eqref{transitivity} with $g=e$. But then $h=e$ and hence 
$y_1=y_2$, as claimed.
\end{proof}

If $Y$ \emph{is} a coadjoint orbit, (\ref{pre_frobenius}a) says that  
$\IND HGY$ ``involves'' just those orbits $\mathcal O$ whose projection in 
$\LH^*$ contains $Y$. Guillemin and Sternberg \cite[\S6]{Guillemin:1982} 
proposed to measure the ``multiplicity'' of this involvement by the (possibly 
empty) space $\Hom_G(\mathcal O,\IND HGY)$, where we write suggestively
\begin{equation}
   \label{symplectic_hom}
   \Hom_G(X_1,X_2):=(X_1^-\times X_2^{\vphantom-})/\!\!/G,
\end{equation}
i.e., the Marsden-Weinstein reduction of $X_1^-\times X_2^{\vphantom-}$ at 
$0\in\LG^*$; here $X_1^-$ is the Hamiltonian $G$-space $X_1$ with its $2$-form 
and momentum map replaced by their negatives, and we regard \eqref{symplectic_hom} simply as a set. Then (\ref{pre_frobenius}a) can 
be refined by the following analog of Frobenius's theorem 
\cite[{}III.6.2]{Brocker:1985}, already found in 
\cite[Thm 2.2]{Guillemin:1983} when both $X$ and $Y$ are coadjoint orbits.

\begin{theo}[Frobenius reciprocity]
   \label{symplectic_frobenius}
   If $X$ is a Hamiltonian $G$-space and $Y$ a Hamiltonian $H$-space, then
   \begin{equation*}
      \Hom_G(X,\operatorname{Ind}^G_HY)= \Hom_H(\operatorname{Res}^G_HX,Y).
   \end{equation*}
\end{theo}

\begin{rems}
Here $\Res^G_HX$ means $X$ regarded as a Hamiltonian $H$-space, and ``$=$'' 
means only a natural bijection as sets. We believe (but haven't proved) that 
both sides are automatically isomorphic as diffeological spaces with 
diffeological $2$-forms as discussed in 
\cite[\S2.5]{Souriau:1985a}, \cite[\S6.38]{Iglesias-Zemmour:2013}, \cite{Karshon:2016}. 
	
Note also that, by the symmetry of \eqref{symplectic_hom}, we may equally write 
Frobenius reciprocity in the form $\Hom_G\left(\IND HGY,Z\right)=
\Hom_H(Y,\Res^G_HZ)$.
\end{rems}

\begin{proof}
For bookkeeping reasons, soon to become clear, rename $G$ also as $K$. 
Consider the spaces $N=X^-\times Y$ with $H$-action $h(x,y)=(h(x), h(y))$, 
and $M=X^-\times T^*K\times Y$ with $K\times H$-action 
$(k,h)(x,\bar p,y)=(k(x),k\bar p h\inv,h(y))$. Their 
equivariant momentum maps are $\psi:N\to\LH^*$,
   \begin{equation}
      \psi(x,y)=\Psi(y)-\Phi(x)\subh
   \end{equation}
   and $\bar\phi\times\bar\psi: M\to\LK^*\times\LH^*$,
   \begin{equation}
      \left\{
      \begin{array}{lll}
         \rlap{$\bar\phi$}\phantom{\bar\psi}(x,\bar p,y) & \!\!= 
         & \!\!\bar p\bar q\inv-\Phi(x)\\[.5ex]
         \bar\psi(x,\bar p,y) & \!\!= & \!\!\Psi(y)- \bar q\inv \bar p\subh
      \end{array}
      \right.
      \rlap{\qquad$(\bar p\in T^*_{\bar q}K)$}
   \end{equation}
where $\Phi$ and $\Psi$ are the equivariant momentum maps of 
$X$ and $Y$, respectively. Defining $r:M\to N$ by 
$r(x,\bar p,y)=(\bar q\inv(x),y)$ now sets us up for a proof using the same 
previous diagram (Fig.~1). Indeed, we have again this time
   \begin{equation}
	   \begin{aligned}
	      \psi((r\circ j_1\circ j)(x,\bar p,y))
	      &=\psi(\bar q\inv(x),y)\\
	      &=\Psi(y)-\Phi(\bar q\inv(x))\subh\\
	      &=\Psi(y)-\bar q\inv\Phi(x)\bar q\subh
	      &&\text{by equivariance}\\
	      &=\Psi(y)-\bar q\inv \bar p\subh
	      &&\text{since }\bar\phi(x,\bar p, y)=0\\
	      &=0\vphantom{\inv}
	      &&\text{since }\bar\psi(x,\bar p, y)=0,
	   \end{aligned}
   \end{equation}
so there is a map $s$ as indicated in Fig.~1. Again, $s$ is onto since 
$(x,y)\mapsto(x,\Phi(x),y)$ provides a right inverse, and $s$ is equivariant
relative to the $K \times  H$-action on 
$(\bar{\phi} \times \bar{\psi})^{-1}(0)$
and the $H$-action on $\psi^{-1}(0)$:
   \begin{equation}
      \begin{aligned}
         s((k,h)(x,\bar p,y))&=r(k(x),k\bar ph\inv,h(y))\\
         &=((k\bar qh\inv)\inv(k(x)), h(y))\\
         &=(h(\bar q\inv(x)),h(y))\\
         &=h(s(x,\bar p,y)).\phantom{\inv}
      \end{aligned}
   \end{equation}
So the fibers of $s$ are again the $K$-orbits and $s$ descends again to a 
bijection $t$ as required and indicated in Fig.~1.
\end{proof}

%%%%%%%%%%%%%%%%%%%%%%%%%%%%%%%%%%%%%%%%%%%%%%%%%%%%%%%%%%%%%%%%%%%%%%%%
\section{An Example}
%%%%%%%%%%%%%%%%%%%%%%%%%%%%%%%%%%%%%%%%%%%%%%%%%%%%%%%%%%%%%%%%%%%%%%%%
A basic shortcoming in the analogy of 
\eqref{symplectic_frobenius} with representation theory is that it cannot mirror 
cases where more than one representation ``quantizes'' a given Hamiltonian 
$G$-space or $H$-space. To make this assertion precise, we restrict attention to the case where $X$ is a coadjoint orbit of a type I solvable Lie group $G$, endowed with its Kirillov-Kostant-Souriau $2$-form $\omega_X$. In that setting ``quantization'' is well-defined by the theory of Auslander and Kostant \cite{Auslander:1971a} where, we recall:
\begin{itemize}
   \item $X$ has irreducible unitary representations attached to it iff the de Rham cohomology class $[\omega_X]$ belongs to $\mathrm H^2(X,\ZZ)$ (in particular if $\omega_X$ is exact).
   \item If so, and if $G$ is simply connected, then $X$ has as many representations attached to it as there are homomorphisms (a.k.a.~characters) from the fundamental group $\pi_1(X)$ to the circle group $\TT$.
\end{itemize}
This can be summed up by saying that the unitary dual $\widehat G$ is parametrized by \emph{prequantized coadjoint orbits} in the sense of Section 5 below. (One should beware that this can fail beyond the solvable context \cite{Rawnsley:1982,Torasso:1983}: the minimal 
nilpotent coadjoint orbit of 
$\smash{\widetilde{\text{SL}}_3(\RR)}$ has four prequantum bundles 
but only three representations attached to it.) The simplest example where \eqref{symplectic_frobenius} falls short is then as follows. In the solvable group $G'$ of all upper triangular matrices of the form
\begin{equation}
   \label{G'}
   g'=
   \begin{pmatrix}
   \e{\i a}&0&0&b\\
           &1&e&f\\
           & &1&a\\
           & & &1
   \end{pmatrix}
   \qquad
   \begin{aligned}
     a,e,f&\in\RR\\
     b&\in\CC,
   \end{aligned}
\end{equation}
write $G$ for the normal subgroup in which $e=0$ and $H$ for the subgroup of $G$ in 
which $a\in 2\pi\ZZ$. Identify $\LG'^*$ with $\RR\times\CC\times\RR^2$ by 
writing $(p,q,s,t)$ for the value at the identity of the 1-form 
\begin{equation}
	pda+\Re(\bar qdb)-sde-tdf.
\end{equation}
The coadjoint action of $G'$ leaves the hyperplane $t=1$ invariant and acts there by
\begin{equation}
   \label{action}
   g'\begin{pmatrix}p\\q\\s\\1\end{pmatrix}
   =\begin{pmatrix}p+e+\Re(\overline{\i b}q\e{\i a})\\q\e{\i a}\\s+a\\1\end{pmatrix}.
\end{equation}
Likewise, identify $\LG^*$ with triples $(p,q,t)$ and $\LH^*$ with pairs 
$(q,t)$ so that the projections \mbox{$\LG'^*\to\LG^*\to\LH^*$} become 
$(p,q,s,t)\mapsto(p,q,t)\mapsto(q,t)$ and the coadjoint actions are by appropriate restrictions of \eqref{action}. Then the coadjoint orbit 
$X'=G'(\check c)$ of $\check c=(0,1,0,1)$ projects onto the orbit
$X=G(\check c\subg)$ and is its universal covering:
  \begin{equation}
     \label{covering}
     \begin{array}{clll}
        X'
        &\!\!\!=
        &\!\!\!\left\{(p,\e{\i s},s,1) : (p,s)\in\RR^2\right\},
        &\quad\omega_{X'}=dp\wedge ds = d(pds),\\
        \downarrow\\[.5ex]
        X
        &\!\!\!= 
        &\!\!\!\bigl\{(p,q,1) : (p,q)\in\RR\times\TT\bigr\},
        &\quad\omega_X=\smash[t]{dp\wedge \dfrac{dq}{\i q}=d\left(p \dfrac{dq}{\i q}\right)}.
     \end{array}
  \end{equation}
So there is a single representation attached to $X'$, and a circle worth of representations attached to $X$. Moreover, one checks (or finds by \cite{Ziegler:2014} applied to the normal 
subgroup~$H^{\mathrm o}$) that
\begin{equation}
   \label{induced_orbits}
	X=\IND HG{\left\{\check c\subh\right\}},
	\qquad\text{and likewise}\qquad
	X'=\IND{H'}{G'}{\left\{\check c\subhprime\right\}}
\end{equation}
where $H'$ is the normal subgroup of $G'$ in which $a=0$. So symplectic Frobenius reciprocity \eqref{symplectic_frobenius} gives the relation
  \begin{equation}
     \label{frobenius_example}
     \begin{aligned}
        \Hom_G(X,\Res^{G'}_GX')
        &=\Hom_H\bigl(\left\{\check c\subh\right\},\Res^{G'}_HX'\bigr)\\
        &= (X'\to\LH^*)\inv(\check c\subh)/H\\
        &=\{\text{a point}\}
     \end{aligned}
  \end{equation}
between $X$ and the restriction of $X'$. But this fact is of little use for representation 
theory, as it fails to predict into \emph{which} of the representations attached to $X$ the representation attached to $X'$ will split (when restricted to $G$). As one knows, this should be fixed by working instead with \emph{prequantum spaces} in the sense of the next section.

%%%%%%%%%%%%%%%%%%%%%%%%%%%%%%%%%%%%%%%%%%%%%%%%%%%%%%%%%%%%%%%%%%%%%%%%
\section{Prequantum $G$-spaces}
%%%%%%%%%%%%%%%%%%%%%%%%%%%%%%%%%%%%%%%%%%%%%%%%%%%%%%%%%%%%%%%%%%%%%%%%
Following \cite{Souriau:1970}, we call \emph{prequantum manifold} a manifold 
$\tilde X$ with a contact $1$-form $\varpi$ whose Reeb vector field generates 
a circle group action. We recall that $\varpi$ \emph{contact} means that 
$\Ker(d\varpi)$ is $1$-dimensional and transverse to $\Ker(\varpi)$; its 
\emph{Reeb} vector field, $\i$, on $\tilde X$ is defined by
\begin{equation}
   \i(\tilde x)\in\Ker(d\varpi)
   \qquad\text{and}\qquad
   \varpi(\i(\tilde x)) =1
   \rlap{\qquad$\forall\,\tilde x\in\tilde X.$}
\end{equation}
Then $(\tilde X,d\varpi)$ is a presymplectic manifold whose null leaves are 
the orbits of the circle group $\TT=\mathrm U(1)$ acting on $\tilde X$ and 
$d\varpi$ descends to a symplectic form $\underline\omega$ on the leaf space 
$X=\tilde X/\TT$. If a Lie group $G$ acts on $\tilde X$ and preserves 
$\varpi$, then it commutes with $\TT$ and the equivariant 
momentum map $\Phi:\tilde X\to\LG^*$,
\begin{equation}
   \label{prequantum_moment}
	\<\Phi(\tilde x),Z\>=\varpi(Z(\tilde x)),
\end{equation}
descends to a momentum map $\underline\Phi:X\to\LG^*$, making 
$(X,\underline\omega,\underline\Phi)$ a Hamiltonian $G$-space 
\emph{prequantized} by the \emph{prequantum $G$-space} $(\tilde X,\varpi)$.
 
We do not distinguish between two spaces $\tilde X_1$, $\tilde X_2$ which are 
\emph{isomorphic}, i.e., related by a $G$-equivariant diffeomorphism which 
transforms $\varpi_1$ into~$\varpi_2$. (If several are in play, we may also use 
subscripts like $\varpi_{\tilde X}$, $\i_{\tilde X}$, $\Phi_{\tilde X}$, etc.) 
We recall three basic constructions in the prequantum category:

\begin{dual}
   \label{prequantum_dual}
   (\cite[{}18.47]{Souriau:1970}.) We write $\tilde X^-$ for the $G$-space 
equal to $\tilde X$ but with opposite $1$-form $-\varpi_{\tilde X}$ (and 
consequently opposite Reeb field and $\TT$-action). It prequantizes the dual 
$G$-space $(X^-,-\underline\omega,-\underline\Phi)$.
\end{dual}

\begin{boxp}
   \label{prequantum_product}
   (\cite[{}18.52]{Souriau:1970}.) If $\tilde X_1$ and $\tilde X_2$ are prequantum $G$\nobreakdash-spaces, then $\tilde X_1\times\tilde X_2$ (with diagonal $G$-action) is a $\TT^2$-space in which the action of the 
anti-diagonal $\Delta=\{(z\inv,z) : z\in\TT\}$ has as its orbits the characteristic leaves of the $1$-form $\varpi_1+\varpi_2$. Hence this descends to the quotient $\tilde X_1\boxtimes\tilde X_2:=(\tilde X_1\times\tilde X_2)/\Delta$ as a $1$-form making it a prequantization of the symplectic product $X_1\times X_2$. In view of \eqref{prequantum_dual}, the $\Delta$-action on $\tilde X_1^{-}\times\tilde X_2^{\phantom-}$ is $z(\tilde x_1,\tilde x_2) = (z(\tilde x_1),z(\tilde x_2))$.
\end{boxp}

\begin{redu}
   \label{prequantum_reduction}
   (\cite[Thm 2]{Loose:2001}.) Assume $G$ acts freely and properly on 
$\tilde X$, and consider the level $L:=\Phi\inv(0)$. By the very definition \eqref{prequantum_moment} 
of $\Phi$ and its being a momentum map, we~have $\LG(\tilde x)
\subset\smash{\Ker(\varpi_{|L})\cap\Ker(d\varpi_{|L})}$. Since 
$\smash{\varpi_{|L}}$ is also $G$-invariant, it follows (see 
\cite[{}5.21]{Souriau:1970}) that it descends to a contact $1$-form on the 
quotient $\tilde X/\!\!/G:=\Phi\inv(0)/G$. This prequantizes the symplectic 
reduction $X/\!\!/G=\underline\Phi\inv(0)/G$.
\end{redu}

%%%%%%%%%%%%%%%%%%%%%%%%%%%%%%%%%%%%%%%%%%%%%%%%%%%%%%%%%%%%%%%%%%%%%%%%
\section{Prequantum Induction}
%%%%%%%%%%%%%%%%%%%%%%%%%%%%%%%%%%%%%%%%%%%%%%%%%%%%%%%%%%%%%%%%%%%%%%%%

Given a closed subgroup $H\subset G$ and a prequantum $H$-space 
$(\tilde Y,\varpi_{\tilde Y})$ whose momentum map \eqref{prequantum_moment} 
we denote~$\Psi$, we propose to construct an \emph{induced} prequantum $G$-space 
$\IND HG{\tilde Y}$ as follows. Consider the prequantum $(G\times H)$-space 
$\smash{\tilde N}=T^*G\times\smash{\tilde Y}$ with $1$-form 
$\varpi_{T^*G}+\varpi_{\tilde Y}$ and action 
$(g,h)(p,\tilde y)=(gph\inv, h(\tilde y))$. This action has the equivariant 
momentum map $\phi\times\psi:\smash{\tilde N}\to\LG^*\times\LH^*$,
\begin{equation}
   \left\{
   \begin{array}{lll}
      \rlap{$\phi$}\phantom{\psi}(p,\tilde y) & \!\!= & \!\!pq\inv\\[.5ex]
      \psi(p,\tilde y) & \!\!= & \!\!\Psi(\tilde y)- q\inv p\subh
   \end{array}
   \right.
   \rlap{\qquad$(p\in T^*_qG)$.}
\end{equation}
The same arguments as with \eqref{induced_manifold}, then, show that
\begin{equation}
   \label{prequantum_induction}
   \IND HG{\tilde Y}:=\tilde N/\!\!/H = \psi\inv(0)/H
\end{equation}
(prequantum reduction \eqref{prequantum_reduction}) is naturally a prequantum 
$G$-space which prequantizes the symplectically induced manifold 
\eqref{induced_manifold}.

%%%%%%%%%%%%%%%%%%%%%%%%%%%%%%%%%%%%%%%%%%%%%%%%%%%%%%%%%%%%%%%%%%%%%%%%
\section{Prequantum Induction in Stages}
%%%%%%%%%%%%%%%%%%%%%%%%%%%%%%%%%%%%%%%%%%%%%%%%%%%%%%%%%%%%%%%%%%%%%%%%

\begin{theo}[Stages]
   If $H \subset K \subset G$ are closed subgroups
   of the Lie group $G$\textup, then
   \begin{equation*}
      \IND KG{\IND HK{\tilde Y}}=\IND HG{\tilde Y}.   
   \end{equation*}
\end{theo}

\begin{proof}
The proof is \emph{mutatis mutandis} the same as for \eqref{symplectic_stages}, 
only simpler. We just switch to working with restrictions and push-forwards of 
the $1$-form $\varpi(\delta p,\delta\bar p,\delta\tilde y)=\<p,\delta q\>+\<\bar p,\delta\bar q\>+\smash{\varpi_{\tilde Y}(\delta\tilde y)}$ on 
$\tilde M=T^*G\times T^*K\times \tilde Y$ instead of the $2$-form 
$\underline\omega$ on $M$.
\end{proof}

%%%%%%%%%%%%%%%%%%%%%%%%%%%%%%%%%%%%%%%%%%%%%%%%%%%%%%%%%%%%%%%%%%%%%%%%
\section{Prequantum Frobenius Reciprocity}
%%%%%%%%%%%%%%%%%%%%%%%%%%%%%%%%%%%%%%%%%%%%%%%%%%%%%%%%%%%%%%%%%%%%%%%%

The three constructions (\ref{prequantum_dual}--\ref{prequantum_reduction}) put together furnish us with a notion of the 
\textit{intertwiner space} of two prequantum $G$-spaces,
\begin{equation}
   \label{prequantum_hom}
   \Hom_G(\tilde X_1, \tilde X_2):=
   (\tilde X_1^-\boxtimes\tilde X_2^{\vphantom-})/\!\!/G.
\end{equation}
Freeness and properness of the last $G$-action are not 
assumed and we again regard \eqref{prequantum_hom} as just a set.

\begin{theo}[Frobenius reciprocity]
\label{contact_frobenius}
If $\tilde X$ is a prequantum $G$-space and $\tilde Y$ a prequantum $H$-space, 
then
   \begin{equation*}
      \Hom_G(\tilde X,\IND HG{\tilde Y})=\Hom_H(\Res^G_H\tilde X,\tilde Y).
   \end{equation*}
\end{theo}

\begin{proof}
   With $\Delta$ as in \eqref{prequantum_product}, define $\tilde{\tilde r}$ in the following commutative diagram by $\tilde{\tilde r}(\tilde x,p,\tilde y)=(q\inv(\tilde x),\tilde y)$, where $p\in T^*_qG$:
   \begin{equation}
      \label{diagram}
      \begin{tikzcd}[arrows={line width=1.2*rule_thickness}]
         \tilde{\tilde M}:=\tilde X^-\times T^*G\times\tilde Y
         \rar[rrr,"\tilde{\tilde r}"]
         \dar[dd,"\mod\Delta",swap]
         & & &
         \tilde{\tilde N}:=\tilde X^-\times\tilde Y
         \dar[dd,"\mod\Delta" inner sep=-1ex]
         \\
         \\
         \tilde M:=\tilde X^-\boxtimes T^*G\times\tilde Y
         \rar[rrr,"\tilde r",dashed]
         \dar[dd,"\mod(\TT^2/\Delta)",swap]
         & & &
         \tilde N:=\tilde X^-\boxtimes\tilde Y
         \dar[dd,"\mod(\TT^2/\Delta)" inner sep=-1ex]
         \\
         \\
         M:= X^-\times T^*G\times Y
         \rar[rrr,"r",dashed]
         & & &
         N:= X^-\times Y.
      \end{tikzcd}
   \end{equation}
   Then $\tilde{\tilde r}$ descends, as indicated, to a map $\tilde r$ and a map $r$ which is the one in our proof of \eqref{symplectic_frobenius}. Now each floor of this diagram supports a horizontal copy of Fig.~1 giving rise to the appropriate tilded versions of $s$ and $t$; a straightforward diagram chase checks that $\tilde t:(\tilde M/\!\!/H)/\!\!/G\to\tilde N/\!\!/H$ is the required bijection.
\end{proof}

%%%%%%%%%%%%%%%%%%%%%%%%%%%%%%%%%%%%%%%%%%%%%%%%%%%%%%%%%%%%%%%%%%%%%%%%
\section{An Example (Reprise)}
%%%%%%%%%%%%%%%%%%%%%%%%%%%%%%%%%%%%%%%%%%%%%%%%%%%%%%%%%%%%%%%%%%%%%%%%

\noindent
Recall the coadjoint orbits $X\cong\RR\times\TT$ and $X'\cong\RR^2$ of 
\eqref{covering}. Referring to 
\cite[{}18.117, 18.133, 18.134]{Souriau:1970} as well as \cite[Thm 5.1.1]{Kostant:1970} and performing direct verifications, one finds:
\begin{itemize}
   \item There is a unique prequantum $G'$-space prequantizing $X'$, namely
   \begin{equation}
      \tilde X' = X'\times\TT\ni(p,s,z)
      \qquad\text{with}\qquad
      \varpi = pds+\frac{dz}{\i z}
   \end{equation}
   with $G'$-action (uniquely lifted from \eqref{action} to preserve $\varpi$)
   \begin{equation}
      \label{prequantum_action}
      g'\begin{pmatrix}p\\s\\z\end{pmatrix}
      =\begin{pmatrix}
         p + e + \Re(\overline{\i b}\e{\i(s+a)})\\
         s+a\\
         z\e{\i\left[\Re\left(\overline b\e{\i(s+a)}\right)-se-f\right]}
      \end{pmatrix}.
   \end{equation}
   \item There is a circle worth of inequivalent prequantum $G$-spaces over $X$, namely all 
   \begin{equation}
      \tilde X_\lambda = X\times\TT\ni(p,q,z)
      \qquad\text{with}\qquad
      \varpi_\lambda = (p+\lambda)\frac{dq}{\i q}+\frac{dz}{\i z}
   \end{equation}
   where $\lambda_1, \lambda_2\in\RR$ give equivalent prequantizations iff they differ by an integer (cf.~\cite{Aharonov:1959, Kastrup:2006}), and the $G$-action on $\tilde X_\lambda$ (uniquely lifted from that on $X$ to preserve $\varpi_\lambda$) reads
   \begin{equation}
      g\begin{pmatrix}p\\q\\z\end{pmatrix}
      =\begin{pmatrix}
         p + \Re(\overline{\i b}q\e{\i a})\\
         q\e{\i a}\\
         z\e{\i\left[\Re\left(\overline bq\e{\i a}\right)-\lambda a-f\right]}
      \end{pmatrix}.
   \end{equation}
\end{itemize}
Moreover, from \eqref{induced_orbits} and the sentence following \eqref{prequantum_induction} one deduces without trouble that
\begin{equation}
   \tilde X_\lambda=\IND HG{\TT_\lambda}
   \qquad\text{and}\qquad
	\tilde X'=\IND{H'}{G'}\TT'
\end{equation}
where $\TT_\lambda$ is the unit circle on which $H$ acts by the character $\chi_\lambda(h)=\e{-\i\lambda a}\e{\i[\Re(b) - f]}$, and $\TT'$ is the unit circle on which $H'$ acts by the character $\chi'(h')=\e{\i[\Re(b) - f]}$ (notation \eqref{G'}). Consequently \eqref{contact_frobenius} gives 
\begin{equation}
   \label{last_Frobenius}
   \begin{aligned}
    \Hom_G(\tilde X_\lambda,\Res^{G'}_G\tilde X')&=
       \Hom_H(\TT_\lambda,\Res^{G'}_H\tilde X')\\
       &= \text{a single circle},
   \end{aligned}
\end{equation} 
as one easily computes from (\ref{prequantum_hom}, \ref{prequantum_action}). This replaces \eqref{frobenius_example} and ``predicts'' that once 
restricted to $G$, the irreducible representation $\IND{H'}{G'}\chi'$ (attached to $\tilde X'$ by Auslander-Kostant \cite{Auslander:1971a}) splits into the direct integral over $\lambda\in\RR/\ZZ$ 
of the irreducible representations $\IND HG{\chi_\lambda}$ (attached to $\tilde X_\lambda$) with 
multiplicity $1$; this prediction is correct and can be checked directly.

%%%%%%%%%%%%%%%%%%%%%%%%%%%%%%%%%%%%%%%%%%%%%%%%%%%%%%%%%%%%%%%%%%%%%%%%
%%% restore this:
\let\i\dotlessi
\let\l\polishl
\let\o\norwegiano
\let\u\russianbreve

\section*{Funding} T.~R. was partially supported by the 
National Natural Science Foundation of China [grant number 11871334] and by 
the Swiss National Science Foundation [NCCR SwissMAP].

\end{document}